\documentclass[reqno]{amsproc}
\usepackage[utf8]{inputenc}
\usepackage{amsmath,amsfonts,amssymb}
\usepackage{enumitem}
\usepackage{hyperref}
\usepackage{etoolbox}
\usepackage{stmaryrd}
\hypersetup{
  hidelinks,
  pdftitle={Multiply warped products with harmonic Weyl tensor and zero radial Weyl curvature},
  pdfauthor={M. A. R. M. Hor\'acio},
  pdfsubject={Differential geometry},
  pdfkeywords={Multiply warped products, harmonic Weyl tensor, radial Weyl curvature}
}

\newtheorem{theorem}{\bf Theorem}
\newtheorem{proposition}{\bf Proposition}
\newtheorem{corollary}{\bf Corollary}
\newtheorem{lemma}{\bf Lemma}
\theoremstyle{definition}
\newtheorem{remark}{\bf Remark}

\numberwithin{equation}{section}
\patchcmd{\subequations}
  {\theparentequation\alph{equation}}
  {\theparentequation.\alph{equation}}{}{}

\everymath{\displaystyle}

\begin{document}

\title[Multiply warped products with harmonic Weyl tensor]
{Multiply warped products with harmonic Weyl tensor and zero radial Weyl curvature}

\author{M. A. R. M. Hor\'acio}
\address{Department of Mathematics, Universidade de Bras\'ilia, Bras\'ilia, DF, Brazil}
\email{matheus.andrade5488@gmail.com}
\thanks{Corresponding author. ORCID: \href{https://orcid.org/0009-0001-5658-7467}{0009-0001-5658-7467}.}

\keywords{Multiply warped products, Harmonic Weyl tensor, Radial Weyl curvature, Conformal metrics.}
\subjclass[2020]{53C21, 53C25, 53B20}

\begin{abstract}
In this article, we study multiply warped products with a one-dimensional base, harmonic Weyl tensor and zero radial Weyl curvature. We prove that these manifolds have at most three fibers and describe their local geometry. The proof uses a conformal change which makes the warping functions quadratic polynomials, together with the compatibility equations imposed by the Ricci tensor. The resulting metrics are either obtained from locally conformally flat multiply warped products by replacing their space-form fibers with Einstein manifolds, or belong to a three-fiber family with equal fiber dimensions and positive Einstein constants. Examples show that the bound is sharp and that neither Weyl condition implies the other. We also discuss the stronger restrictions arising from soliton-type equations.
\end{abstract}

\maketitle

\section{Introduction and main results}

We consider a multiply warped product
\begin{align}\label{MWPS}
M^n=I\times_{h_1}N_1^{r_1}\times\cdots\times_{h_k}N_k^{r_k},
\qquad n=1+\sum_{i=1}^k r_i\geq4,
\end{align}
endowed with the Riemannian metric
\begin{align}\label{metricMWPS}
g=\mathrm{d}s^2+\sum_{i=1}^k h_i(s)^2g_{N_i}.
\end{align}
Here $I$ is a connected open interval, each $N_i$ is connected, and each $h_i$ is positive and smooth. We write $E_1=\partial_s$.

\begin{remark}[Number of fibers]\label{counting}
If two warping functions differ by a positive constant factor on $I$, we rescale the corresponding fiber metrics and combine them into a single fiber. Throughout the article, $k$ is counted after these identifications. We keep the interval base fixed, so a constant warping function, when present, is retained and counted. There is at most one such function. This is the only difference from the convention in \cite[Remark 2.4]{BHSalmost}, where constant warping functions are absorbed into the base.
\end{remark}

An estimate on the number of fibers is a first step towards a local description of these manifolds. In the locally conformally flat case, Brozos-V\'azquez, Garc\'ia-R\'io and V\'azquez-Lorenzo \cite[Theorem 9]{brovaz2005}, \cite[Theorem 3.7]{brovaz} showed that there are at most three fibers and described the possible local models. Harmonic Weyl curvature is a natural condition to consider beyond local conformal flatness, since it includes both locally conformally flat and Einstein metrics. The harmonic-curvature examples of Derdzi\'nski and Piccione \cite{derdPiccione}, however, show that harmonicity alone allows an arbitrary number of fibers.

Soliton-type equations give stronger estimates. Kim obtained local classifications of gradient Ricci solitons with harmonic Weyl curvature in dimension four \cite{kimfour} and in higher dimensions \cite{kimall}. For gradient Ricci and gradient almost Ricci solitons with harmonic Weyl tensor, the logarithmic derivatives in a potential-adapted multiply warped representation satisfy a nonzero polynomial of degree at most two, giving at most two fibers; see \cite{BHSricci} and \cite[Lemma 3.10 and Corollary 3.11]{BHSalmost}.

Here a representation is \emph{adapted to the potential} $f$ when $f=f(s)$ and $f'\neq0$: the slices $s=\mathrm{constant}$ are then the connected regular level sets of $f$, and $\nabla f=f'E_1$. The fiber equations relate their warping functions to the Ricci eigenvalues transverse to $\nabla f$, as we explain in Section \ref{additional}. Under local conformal flatness, the regular potential levels are totally umbilical and a single fiber occurs \cite[Corollary 1.4]{catino}. Catino \cite[Theorem 1.1]{catino} obtained the corresponding single-fiber description for generalized quasi-Einstein manifolds by combining harmonic Weyl curvature with radial Weyl vanishing.

The main goal of this article is to study the corresponding pair of Weyl conditions on a multiply warped product itself. The condition of zero radial Weyl curvature is
\begin{align}\label{radialW}
W(E_1,\cdot,\cdot,\cdot)=0.
\end{align}
For \eqref{metricMWPS}, this is equivalent to $W(E_1,U,E_1,V)=0$ for all vectors tangent to the fibers.

The role of \eqref{radialW} is explained by the conformal transformation law of the Cotton tensor, displayed in \eqref{conformalCotton}. For a metric with harmonic Weyl tensor, it is precisely the condition which preserves harmonicity under every conformal change depending only on the base. This permits the conformal reduction used by Brozos-V\'azquez, Garc\'ia-R\'io and V\'azquez-Lorenzo in the locally conformally flat setting \cite[Theorem 9]{brovaz2005}, \cite[Lemma 3.1]{brovaz}: dividing by the square of one warping function produces a direct product factor. Catino's proof uses the same Cotton transformation to justify a conformal change determined by the potential. Here the conformal factor is chosen from the warping functions themselves.

Our main result is the following.

\enlargethispage{-2pt}
\begin{theorem}\label{main}
Let $M^n$ be a multiply warped product as in \eqref{MWPS}, with fibers counted as in Remark \ref{counting}. Suppose that its Weyl tensor is harmonic and satisfies \eqref{radialW}. Then every fiber is Einstein and $k\leq3$. Moreover, exactly one of the following alternatives holds:
\begin{enumerate}[label=\textup{(\roman*)}]
\item\label{purecase} The Weyl tensor has nonzero components only within individual fibers. Locally, the metric is obtained from a locally conformally flat multiply warped product by replacing its space-form fibers with Einstein manifolds of the same dimensions and Ricci constants.
\item\label{exceptionalcase} There are three fibers of the same dimension $r\geq2$. After a local change of base coordinate, constant rescaling and permutation of the fiber metrics, the metric is
\begin{align}\label{exceptional}
g=e^{2\varphi(t)}\bigl(\mathrm{d}t^2+t^2g_{N_1}
 +(1-t)^2g_{N_2}+t^2(1-t)^2g_{N_3}\bigr),
\end{align}
where $\varphi$ is smooth,
\begin{align}\label{exceptionalRic}
\mathrm{Ric}_{N_i}=(2r-1)g_{N_i},\qquad i\in\{1,2,3\},
\end{align}
and $t$ varies in an interval avoiding $0$ and $1$. All three Weyl sectional components between distinct fibers are nonzero.
\end{enumerate}
Conversely, the metrics described in \textup{(i)} and \textup{(ii)} have harmonic Weyl tensor and zero radial Weyl curvature.
\end{theorem}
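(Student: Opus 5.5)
The plan follows the abstract: first a conformal reduction, then the Ricci compatibility equations.

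\textbf{Step 1 (conformal invariance).} By the transformation law \eqref{conformalCotton}, under $\tilde g=e^{2\psi(s)}g$ the Cotton tensor changes by a term of the form $W(\nabla\psi,\cdot,\cdot,\cdot)$. Since $\nabla\psi$ is proportional to $E_1$, condition \eqref{radialW} makes this term vanish. The Weyl tensor of type $(1,3)$ is conformally invariant, so \eqref{radialW} persists. Hence the hypotheses are preserved under every conformal change depending only on $s$, and I may replace $g$ by any such conformal metric.

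\textbf{Step 2 (the radial conditions).} For \eqref{metricMWPS}, I compute $W(E_1,U,E_1,V)$ with $U\in TN_i$, $V\in TN_j$. For $i\ne j$ this vanishes automatically, so only $i=j$ matters. With $R(E_1,U,E_1,U)=-h_i''/h_i$ and $\mathrm{Ric}(E_1,E_1)=-\sum_j r_jh_j''/h_j$, the diagonal condition says that $h_i''/h_i$ is independent of $i$ up to the Schouten correction. Precisely, $-h_i''/h_i-P(E_1,E_1)-P(U,U)/|U|^2=0$ for every $i$. For the harmonic condition, I use that $\delta W=0$ is equivalent to the Schouten tensor $P$ being Codazzi when $n\ge4$. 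For a warped structure, the Codazzi equation $(\nabla_U P)(E_1,V)=(\nabla_{E_1}P)(U,V)$ forces three things. First, $\mathrm{Ric}_{N_i}$ restricted to $N_i$ gives a Ricci eigenvalue of $g$ that depends only on $s$, so each $N_i$ is Einstein. Second, the mixed components give ODEs of the form $\bigl(P_i\bigr)'=\frac{h_i'}{h_i}\bigl(P_{11}-P_i\bigr)$, where $P_i$ is the Schouten eigenvalue on $N_i$. Third, the Codazzi equations within a fiber are automatic.

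\textbf{Step 3 (normalization).} I now choose a conformal factor built from the warping functions, say dividing by $h_1^2$, and reparametrize $t$ by the new arclength. The goal is to show that after a suitable choice the new warping functions $u_i=h_i^2/h_0^2$ satisfy $u_i'''=0$, so they are quadratic polynomials in $t$. This should come from combining Step 2's diagonal radial condition, which equalizes the second-derivative quantities across fibers, with the Codazzi ODEs; the differences of the radial equations give linear ODEs on the $u_i$. Once the $u_i$ are quadratics, the fiber Ricci equations, which say $\mathrm{Ric}_{N_i}=\lambda_i g_{N_i}$ with constant $\lambda_i$, become polynomial identities in $t$. The constraint $\lambda_i$ constant then gives algebraic relations, roughly $\lambda_i/(r_i-1)$ equal to a discriminant-type expression $(u_i')^2/4-u_iu_i''/2$ plus cross terms $u_i'u_j'/(u_iu_j)$-type identities.

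\textbf{Step 4 (counting, the main obstacle).} I expect the counting to be the hardest part. The cross terms must cancel, which forces the quadratics $\sqrt{u_i}$ (or the $u_i$ themselves) to share roots in a rigid pattern. I would argue by the multiplicities of the roots. Four pairwise non-proportional quadratics cannot satisfy the pairwise compatibility relations, which gives $k\le3$. The generic compatible configurations reproduce the locally conformally flat list of \cite{brovaz}, and these form case (i), where the mixed Weyl components $W(U,V,U,V)$ vanish. The sole exception is $\{t^2,(1-t)^2,t^2(1-t)^2\}$, where the mixed components are nonzero. For this family, the constancy equations force $r_1=r_2=r_3=r$ and the Einstein constants $2r-1$ in \eqref{exceptionalRic}; a direct computation gives these values.

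\textbf{Step 5 (converse).} For the converse, I verify the Codazzi property of $P$ and \eqref{radialW} directly for \eqref{exceptional}, first at $\varphi=0$ and then invoking Step 1. For case (i), replacing space-form fibers by Einstein fibers with the same Ricci constant changes neither $P$ nor the radial Weyl components. This is because these depend on the fibers only through $\mathrm{Ric}_{N_i}$.
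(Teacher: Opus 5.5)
Your outline has the right architecture (radial conformal invariance, Einstein fibers and the Cotton ODEs, reduction to polynomial data, algebra of the fiber equations). However, the two steps that carry the proof are only announced, and Step 3 as stated is wrong.

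\textbf{Step 3.} The squares $u_i=h_i^2/h_0^2$ do not satisfy $u_i'''=0$. In the exceptional family the normalized warping functions are $t$, $1-t$, $t(1-t)$, so one square is $t^2(1-t)^2$. Your own discriminant $(u_i')^2/4-u_iu_i''/2$ is constant only when $u_i$ is itself quadratic, that is, only for the warping functions.

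More importantly, you give no mechanism, and differences of the radial equations do not supply one. The missing idea is to differentiate $h_i''=-\frac{1}{n-2}(\mathcal{A}_{11}+\mathcal{A}_i)h_i$ and substitute $\mathcal{A}_i'=\xi_i(\mathcal{A}_{11}-\mathcal{A}_i)$. The fiber-dependent eigenvalue then drops out, and every warping function satisfies the same equation $h_i'''+2Qh_i'+Q'h_i=0$ with $Q=\mathcal{A}(E_1,E_1)/(n-2)$.

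Dividing by $h_1^2$ only makes $Q$ constant. The first fiber becomes a product factor with Ricci eigenvalue $\mu_1$, and the Cotton ODE with $\tilde\xi_1=0$ makes $\tilde R$ constant. For $Q\neq0$ the solutions are trigonometric or hyperbolic. A second, local conformal change by $\rho^{-2}$ with $Q\rho^2+\rho\rho''-(\rho')^2/2=0$ is needed to reach $h_i'''=0$.

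Because that change is local, you also need a continuation argument to get $k\le3$ on all of $I$ and the ``exactly one'' dichotomy. The paper uses uniqueness for the third-order equation, together with the constancy of $h_i'h_j'+\frac{1}{n-2}(\lambda_{i+1}+\lambda_{j+1}-\frac{R}{n-1})h_ih_j=-h_ih_jW(U_i,U_j,U_i,U_j)$. This quantity is invariant under radial conformal changes.

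\textbf{Step 4.} Here you assert the content of the theorem rather than prove it. That four pairwise nonproportional quadratics are incompatible, and that roots are shared ``in a rigid pattern'', is exactly what must be shown. What works is to write the fiber equations as $\sum_j r_jK_{ij}/h_j=(\mu_i+K_{ii})/h_i$. Here $K_{ij}=h_i'h_j'-h_i''h_j-h_ih_j''$ is a constant symmetric bilinear invariant whose diagonal is the discriminant. Then split into cases:
\begin{enumerate}
\item If some $K_{ii}=0$, an inversion $u=1/(t-c)$ at the double root yields a constant warping function. Then $\lambda_1$ is constant, and a degree comparison forces every $h_j$ to be affine. The equation $\mu_j+(h_j')^2=Sh_j'h_j$ then leaves at most one nonconstant warping function.
\item If no two $h_i$ share a root, evaluating at the roots gives $K_{ij}=0$ for $i\neq j$. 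Since $K_{jj}\neq0$, the $h_i$ are linearly independent in the three-dimensional space of quadratics, so $k\le3$.
\item If two share a root, it is necessarily real and simple. Invert there, normalize them to $t$ and $1-t$, and solve for $\lambda_1$ and $S$. Comparing the residues of $S=\sum_jr_jh_j'/h_j$ forces every root into $\{0,1\}$, so only $t$, $1-t$, $t(1-t)$ survive. Constancy of $\mu_1,\mu_2$ then gives $r_1=r_2=r_3$ (so the third fiber is present) and $\mu_i=2r-1$.
\end{enumerate}

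\textbf{Case (i).} This does not follow from matching the known locally conformally flat list. You must show that when all $K_{ij}$ with $i\neq j$ vanish, replacing each Einstein fiber by a space form with the same Ricci constant leaves Ricci and Schouten unchanged. The trace-free property of $W$ then kills the remaining within-fiber part.

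\textbf{Einstein fibers.} The Codazzi equation alone gives only $\xi_i\,\mathring{\mathrm{Ric}}_{N_i}=0$, which says nothing for a constant warping function. The Einstein property comes from the trace-free part of $W(E_1,U,E_1,V)=0$, together with $C(E_1,U,E_1)=U(R)/(2(n-1))$.
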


The positive warping functions in \eqref{exceptional} are obtained by taking absolute values of the polynomial factors. The local alternatives in Theorem \ref{main} are compatible along the connected interval, and the number of fibers is counted on the whole interval $I$.

The first step in the proof is to use the curvature assumptions to show that the fibers are Einstein and that all warping functions satisfy a common linear equation of order three. We then divide the metric by the square of one of its warping functions. This produces a constant warping function and makes the radial Schouten eigenvalue constant. A further conformal change makes this eigenvalue zero, and the common differential equation becomes $h_i'''=0$. The fiber Ricci equations then provide the compatibility relations which bound the number of fibers.

The bound remains sharp when all warping derivatives are required to be nowhere zero.

\begin{theorem}\label{sharpness}
For every integer $r\geq2$, there is a multiply warped product over an interval with three Einstein fibers of dimension $r$, harmonic Weyl tensor and zero radial Weyl curvature. The three warping derivatives are nowhere zero, their logarithmic derivatives are pairwise distinct at every point, and the Weyl sectional components between distinct fibers are nonzero and have both signs.
\end{theorem}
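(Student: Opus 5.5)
The plan is to realize the example inside the exceptional family \ref{exceptionalcase} of Theorem \ref{main}, using the converse part of that theorem. Fix $r\geq2$ and let each $N_i$ be a round sphere $S^r$ whose metric is rescaled so that $\mathrm{Ric}_{N_i}=(2r-1)g_{N_i}$. Concretely, take $g_{N_i}=\frac{r-1}{2r-1}g_{S^r}$, which is possible because $r\geq2$ gives $r-1>0$. On a small interval $J$ around $t_0=1/2$, consider the metric \eqref{exceptional} with the simple conformal factor $\varphi(t)=t$. By the converse statement in Theorem \ref{main}, this metric has harmonic Weyl tensor and satisfies \eqref{radialW}. Passing to the arclength parameter $s$, with $\mathrm{d}s=e^{\varphi}\mathrm{d}t$, puts it in the form \eqref{metricMWPS}. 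The warping functions are
\begin{align*}
h_1=e^{t}t,\qquad h_2=e^{t}(1-t),\qquad h_3=e^{t}t(1-t).
\end{align*}
Their pairwise ratios are nonconstant and none of them is constant, so $k=3$ in the sense of Remark \ref{counting}.

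Next I check the derivative conditions. Since $s\mapsto t$ is a monotone diffeomorphism and each $h_i>0$, the derivative $h_i'$ vanishes nowhere iff $(\log h_i)_t$ vanishes nowhere. Moreover, the pairwise comparisons of the $s$-log-derivatives are the $t$-comparisons multiplied by the common positive factor $e^{-\varphi}$. In $t$ the logarithmic derivatives are
\begin{align*}
\varphi'+\tfrac1t,\qquad \varphi'-\tfrac1{1-t},\qquad \varphi'+\tfrac1t-\tfrac1{1-t}.
\end{align*}
Their pairwise differences are $\frac1{t(1-t)}$, $\frac1t$ and $\frac1{1-t}$ up to sign, which are nonzero on $(0,1)$ regardless of $\varphi$. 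At $t=1/2$ with $\varphi'=1$ the three values are $3,-1,1$. By continuity they stay nonzero on a small interval $J$, which gives the second and third claims.

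Finally I handle the Weyl components between fibers. Theorem \ref{main}\ref{exceptionalcase} already gives that all three are nonzero, so it remains to exhibit both signs. Because the $(1,3)$ Weyl tensor is conformally invariant, the Weyl sectional components of $g$ equal $e^{-2\varphi}$ times those of $\hat g=\mathrm{d}t^2+\sum p_i^2g_{N_i}$, where $p_1=t$, $p_2=1-t$, $p_3=t(1-t)$. Hence the signs can be computed from $\hat g$. Writing $A$ for the Schouten tensor of $\hat g$, the multiply warped product formulas give
\begin{align*}
K(U_i,U_j)=-\frac{p_i'p_j'}{p_ip_j},\qquad K(\partial_t,U_i)=-\frac{p_i''}{p_i}.
\end{align*}
Radial Weyl vanishing gives $A_{ii}=-p_i''/p_i-A_{00}$, and therefore
\begin{align*}
W_{ij}=-\frac{p_i'p_j'}{p_ip_j}+\frac{p_i''}{p_i}+\frac{p_j''}{p_j}+2A_{00}.
\end{align*}
Here $A_{00}$ is determined by the scalar curvature, which involves the fiber constants $2r-1$. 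I will evaluate these at $t=1/2$. The symmetry $t\leftrightarrow1-t$ swaps fibers $1$ and $2$ and fixes $3$, so $W_{13}=W_{23}$ there.

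The main obstacle is this sign computation, since $A_{00}$ requires the full scalar curvature. To bypass most of it, I will take differences, in which $A_{00}$ cancels:
\begin{align*}
W_{12}-W_{13}=\Bigl(\frac{p_2''}{p_2}-\frac{p_3''}{p_3}\Bigr)-\frac{p_1'}{p_1}\Bigl(\frac{p_2'}{p_2}-\frac{p_3'}{p_3}\Bigr).
\end{align*}
At $t=1/2$ this equals $8-(2)(-4)=16\neq0$. I will then combine it with the trace-free identity for $W$ in a fiber direction $U_3$. Since $W(U_3,\cdot,\cdot,U_3)$ has zero trace, its radial part vanishes, and the trace of the intra-fiber part is fixed by the Einstein constants, the identity gives one linear relation among $W_{13},W_{23}$ and that trace. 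From this relation and the difference above I aim to conclude that, at $t=1/2$, $W_{12}$ and $W_{13}=W_{23}$ have opposite signs. If this turns out to be inconclusive, I will fall back on computing $A_{00}$ directly from the scalar curvature at $t=1/2$.
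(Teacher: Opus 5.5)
Your choice of fibers, your appeal to the converse of Theorem \ref{main}, and your checks on the warping and logarithmic derivatives are correct. Centering at $t=1/2$ is what forces you to take $\varphi(t)=t$; the paper takes $\varphi=0$ on $(0,1/2)$, where $h_3'=1-2t$ already vanishes nowhere. The gap is the sign claim, which you only announce as an aim with a fallback. The obstacle you name is not actually there. Your $\hat g=\mathrm{d}t^2+\sum_i p_i^2g_{N_i}$ is exactly the normalized representative of Proposition \ref{quadratic}: it has quadratic warping functions satisfying \eqref{rationalcompat}, and the converse part of that proposition gives zero radial Schouten eigenvalue. Hence $A_{00}\equiv0$, and your formula for $W_{ij}$ is just \eqref{mixedW}. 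Then \eqref{exceptionalK} gives $W_{12}=1/(t(1-t))>0$, $W_{13}=-1/(t^2(1-t))<0$ and $W_{23}=-1/(t(1-t)^2)<0$ on all of $(0,1)$. The factor $e^{-2\varphi}$ does not change these signs.

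Your trace-free route, as stated, rests on a false premise. The intra-fiber trace $T_3$ is not determined by the Einstein constants alone, because its Schouten part $-2(r-1)A_{33}=2(r-1)(p_3''/p_3+A_{00})$ contains the same unknown. At $t=1/2$ one finds $T_3=16r+2(r-1)A_{00}$ and $W_{13}=W_{23}=-8+2A_{00}$. So the identity $r(W_{13}+W_{23})+T_3=0$ collapses to $(6r-2)A_{00}=0$; it is your fallback in disguise.

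Knowing only the sign of $W_{13}$ would not decide the sign of $W_{12}$; you need the size of $W_{13}$ relative to the difference. That difference is $8-(2)(-2)=12$, not $16$. The actual values at $t=1/2$ for $\hat g$ are $W_{12}=4$ and $W_{13}=W_{23}=-8$.

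Your fallback does close the argument. By \eqref{ricformulas}, at $t=1/2$ one has $\mathrm{Ric}(\partial_t,\partial_t)=8r$ and $R=48r^2$, with $2(n-1)=6r$, so $A_{00}=0$ there. The nonvanishing from Theorem \ref{main} then keeps the signs constant along $J$.
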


In Section \ref{examples}, we prove Theorem \ref{sharpness} and give examples showing that neither Weyl condition implies the other. Finally, Section \ref{additional} relates the number of fibers to the transverse Ricci spectrum and gives an alternative proof of Catino's single-fiber result in the potential-adapted multiply warped setting.

\section{Preliminaries}\label{prelim}

We adopt the curvature convention
\[
\mathrm{Rm}(X,Y,Z,T)
 =\left\langle\nabla_Y\nabla_X Z-\nabla_X\nabla_Y Z
       +\nabla_{[X,Y]}Z,T\right\rangle.
\]
We use the Schouten tensor with the normalization
\begin{align}\label{schouten}
\mathcal{A}=\mathrm{Ric}-\frac{R}{2(n-1)}g.
\end{align}
Thus the Weyl and Cotton tensors are
\begin{align*}
W&=\mathrm{Rm}-\frac{1}{n-2}\mathcal{A}\varowedge g,\\
C(X,Y,Z)&=(\nabla_X\mathcal{A})(Y,Z)-(\nabla_Y\mathcal{A})(X,Z),
\end{align*}
where $\varowedge$ denotes the Kulkarni--Nomizu product. We take $\delta W$ to be the negative contraction of $\nabla W$ in its derivative and first curvature arguments. Since $n\geq4$, harmonicity of $W$ is equivalent to $C=0$.

For the metric \eqref{metricMWPS}, set
\begin{align}\label{xidef}
\xi_i=\frac{h_i'}{h_i},\qquad S=\sum_{i=1}^k r_i\xi_i.
\end{align}
A prime denotes differentiation with respect to the arclength coordinate on the base. The Levi--Civita connection, for basic lifts $U_i,V_i$ tangent to $N_i$, is given by
\begin{align*}
\nabla_{E_1}E_1&=0,&
\nabla_{E_1}U_i=\nabla_{U_i}E_1&=\xi_iU_i,\\
\nabla_{U_i}V_i&=\nabla^{N_i}_{U_i}V_i-\xi_i g(U_i,V_i)E_1,&
\nabla_{U_i}V_j&=0\quad(i\neq j).
\end{align*}
The standard curvature formulas \cite[Propositions 2.2--2.6]{DobUn} consequently give
\begin{align}\label{ricformulas}
\mathrm{Ric}(E_1,E_1)&=-S'-\sum_{i=1}^k r_i\xi_i^2,\nonumber\\
\mathrm{Ric}|_{TN_i}&=\mathrm{Ric}_{N_i}-(\xi_i'+S\xi_i)g|_{TN_i},\\
R&=\sum_{i=1}^k\frac{R_{N_i}}{h_i^2}-2S'-S^2-\sum_{i=1}^k r_i\xi_i^2.\nonumber
\end{align}
All mixed Ricci components vanish. For unit vectors $U_i\in TN_i$ and $U_j\in TN_j$, with $i\neq j$, we also have
\begin{align}\label{sectional}
\mathrm{Rm}(E_1,U_i,E_1,U_i)=-\xi_i'-\xi_i^2,
\qquad
\mathrm{Rm}(U_i,U_j,U_i,U_j)=-\xi_i\xi_j.
\end{align}
The components with exactly one radial argument vanish.

\begin{lemma}\label{einsteinODE}
Under the hypotheses of Theorem \ref{main}, there are constants $\mu_i$ such that
\[
\mathrm{Ric}_{N_i}=\mu_i g_{N_i}.
\]
Write $\lambda_1=\mathrm{Ric}(E_1,E_1)$ and denote by $\lambda_{i+1}$ the Ricci eigenvalue on $TN_i$. Then
\begin{align}
\xi_i'+\xi_i^2
 &=-\frac{1}{n-2}\left(\lambda_1+\lambda_{i+1}-\frac{R}{n-1}\right),\label{radialODE}\\
\lambda_{i+1}'-\frac{R'}{2(n-1)}
 &=(\lambda_1-\lambda_{i+1})\xi_i,\label{cottonODE}\\
\frac{\mu_i}{h_i^2}&=\lambda_{i+1}+\xi_i'+S\xi_i.\label{compatRic}
\end{align}
In particular, all warping functions satisfy the same linear equation
\begin{align}\label{thirdODE}
h_i'''
 +\frac{2}{n-2}\left(\lambda_1-\frac{R}{2(n-1)}\right)h_i'
 +\frac{1}{n-2}\left(\lambda_1'-\frac{R'}{2(n-1)}\right)h_i=0.
\end{align}
\end{lemma}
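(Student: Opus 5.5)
The plan is to read off everything from two ingredients: the vanishing of the radial Weyl components \eqref{radialW}, and the vanishing of the Cotton tensor, which is equivalent to harmonicity of $W$ because $n\geq4$. Throughout I write
\[
P=\mathcal{A}(E_1,E_1)=\lambda_1-\frac{R}{2(n-1)}.
\]
A priori the restriction $\mathcal{A}|_{TN_i}$ need not be pure trace, and $R$ may depend on the fiber point, since $R$ contains the terms $R_{N_j}/h_j^2$.

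\emph{Step 1: pointwise Einstein fibers and \eqref{radialODE}.} Let $U,V$ be tangent to $N_i$. By \eqref{radialW}, $\mathrm{Rm}(E_1,U,E_1,V)=\frac{1}{n-2}(\mathcal{A}\varowedge g)(E_1,U,E_1,V)$. By \eqref{sectional}, the left side equals $(-\xi_i'-\xi_i^2)g(U,V)$. The right side is $\frac{1}{n-2}\bigl(P\,g(U,V)+\mathcal{A}(U,V)\bigr)$, because the mixed Ricci components vanish. Hence $\mathcal{A}|_{TN_i}$, and therefore $\mathrm{Ric}|_{TN_i}$, is a multiple $\lambda_{i+1}g$ of the metric. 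Taking traces of $\mathcal{A}$ gives \eqref{radialODE}. By \eqref{ricformulas}, $\mathrm{Ric}_{N_i}=(\lambda_{i+1}+\xi_i'+S\xi_i)h_i^2g_{N_i}$, so each $N_i$ is Einstein pointwise. Once constancy is known, this is exactly \eqref{compatRic}.

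\emph{Step 2: constancy of $\mu_i$.} I will not rely on Schur, since it fails for $r_i=2$. Instead I evaluate $C(U,E_1,E_1)$ for $U$ tangent to a fiber, using the connection formulas. The term $(\nabla_{E_1}\mathcal{A})(U,E_1)$ vanishes, since $\mathcal{A}(U,E_1)=0$ and $\nabla_{E_1}E_1=0$. The other term is $(\nabla_U\mathcal{A})(E_1,E_1)=U(P)-2\xi_i\mathcal{A}(U,E_1)=U(P)$. Thus $C=0$ forces $P$ to be constant along the fibers. Since $\lambda_1$ depends only on $s$, $R$ is constant along each fiber. In the formula for $R$, only the term $R_{N_i}/h_i^2$ can vary along $N_i$, so $R_{N_i}$ is constant. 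Therefore $\mu_i=R_{N_i}/r_i$ is a constant; the case $r_i=1$ is trivial with $\mu_i=0$.

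\emph{Step 3: \eqref{cottonODE}.} Take a basic lift $U$ of a fiber vector with $g(U,U)=h_i^2$, and set $a_i=\lambda_{i+1}-\frac{R}{2(n-1)}$, so that $\mathcal{A}(U,U)=a_ih_i^2$. First,
\[
(\nabla_{E_1}\mathcal{A})(U,U)=(a_ih_i^2)'-2\xi_ia_ih_i^2=a_i'h_i^2.
\]
Second, using $\nabla_UU=\nabla^{N_i}_UU-\xi_ih_i^2E_1$,
\[
(\nabla_U\mathcal{A})(E_1,U)=-\mathcal{A}(\nabla_UE_1,U)-\mathcal{A}(E_1,\nabla_UU)=-\xi_ia_ih_i^2+\xi_ih_i^2P.
\]
Setting $C(E_1,U,U)=0$ gives $a_i'=\xi_i(P-a_i)=\xi_i(\lambda_1-\lambda_{i+1})$, which is \eqref{cottonODE}.

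\emph{Step 4: the third-order equation.} Note that $\xi_i'+\xi_i^2=h_i''/h_i$, so \eqref{radialODE} reads $h_i''/h_i=-(P+a_i)/(n-2)$. Differentiating and substituting \eqref{cottonODE} gives
\[
\Bigl(\frac{h_i''}{h_i}\Bigr)'=-\frac{1}{n-2}\bigl(P'+\xi_i(P-a_i)\bigr).
\]
Next eliminate $a_i=-(n-2)h_i''/h_i-P$, which turns $P-a_i$ into $2P+(n-2)h_i''/h_i$. The term $-h_i''h_i'/h_i^2$ on the left then cancels against the one produced on the right. What remains is $h_i'''/h_i=-(P'+2P\xi_i)/(n-2)$. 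Multiplying by $h_i$ gives \eqref{thirdODE}.

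The only nonroutine point is Step 2: choosing the Cotton component $C(U,E_1,E_1)$ to get constancy of $R_{N_i}$ even for two-dimensional fibers. The remaining steps are bookkeeping with the sign conventions.
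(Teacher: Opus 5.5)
Your proof is correct and follows the paper's argument. The Cotton component $C(U,E_1,E_1)=-C(E_1,U,E_1)$ gives constancy of $R_{N_i}$ without Schur, the radial Weyl equation gives the Einstein condition and \eqref{radialODE}, $C(E_1,U,U)=0$ gives \eqref{cottonODE}, and differentiating $h_i''/h_i$ gives \eqref{thirdODE}. The only difference is the order of the first two steps: you extract the pointwise Einstein condition before proving constancy, whereas the paper first shows that $R_{N_i}$ is constant and then applies the trace-free part of the radial Weyl equation.
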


\begin{proof}
Since $C=0$, the connection formulas and \eqref{ricformulas} give
\begin{align}\label{fiberCotton}
0=C(E_1,U_i,E_1)=\frac{U_i(R)}{2(n-1)}
 =\frac{U_i(R_{N_i})}{2(n-1)h_i^2}.
\end{align}
Hence each $R_{N_i}$ is constant. The trace-free part within $TN_i$ of the radial Weyl equation, together with \eqref{ricformulas} and \eqref{sectional}, yields
\[
\mathrm{Ric}_{N_i}=\frac{R_{N_i}}{r_i}g_{N_i}.
\]
This proves the first assertion, with $\mu_i=R_{N_i}/r_i$.

The radial Weyl equation gives \eqref{radialODE}. From the connection formulas we obtain
\begin{align}\label{cottoncomponent}
C(E_1,U_i,V_i)=
\left(\lambda_{i+1}'-\frac{R'}{2(n-1)}
       -(\lambda_1-\lambda_{i+1})\xi_i\right)g(U_i,V_i),
\end{align}
which proves \eqref{cottonODE}. The other Cotton components vanish because the fiber metrics are Einstein. Equation \eqref{compatRic} is the fiber component of \eqref{ricformulas}. Finally, differentiating
\[
h_i''=-\frac{1}{n-2}\left(\lambda_1+\lambda_{i+1}-\frac{R}{n-1}\right)h_i
\]
and using \eqref{cottonODE} gives \eqref{thirdODE}.
\end{proof}

\begin{remark}\label{continuation}
If $h_i=ch_j$ on a nonempty open subinterval, then $h_i-ch_j$ solves \eqref{thirdODE} with zero initial data through order two. Uniqueness implies $h_i=ch_j$ on all of $I$. Consequently, the number of fibers does not change upon restriction to an open subinterval. This allows us to obtain the estimate on $I$ from the local calculations below.
\end{remark}

\section{Conformal changes and quadratic warping functions}\label{conformal}

The conformal transformation of the Cotton tensor explains the radial hypothesis. For $\widetilde g=e^{2\varphi}g$ and an arbitrary smooth function $\varphi$, the formula in the conventions of Section \ref{prelim} is
\begin{align}\label{conformalCotton}
\widetilde C(X,Y,Z)
 =C(X,Y,Z)+(n-2)W(\nabla\varphi,Z,X,Y).
\end{align}
Here the gradient and the Weyl tensor on the right-hand side are computed with respect to $g$, and both Cotton tensors are viewed as covariant three-tensors; see \cite[Appendix]{catino} and \cite[Section 2]{goverNagy}. In particular, when $\varphi=\varphi(s)$, the additional term is $(n-2)\varphi'W(E_1,Z,X,Y)$. Thus \eqref{radialW} makes the Cotton tensor unchanged by every such conformal change. Conversely, if $C=0$ and every conformal change depending on $s$ has zero Cotton tensor, choosing $\varphi$ with $\varphi'\neq0$ gives \eqref{radialW}. We use this freedom to choose a representative suited to the fiber equations.

\begin{lemma}\label{conformallemma}
Suppose that \eqref{metricMWPS} has zero radial Weyl curvature. For every smooth $\varphi:I\to\mathbb{R}$, the metric $\widetilde g=e^{2\varphi}g$ also has zero radial Weyl curvature and has the same Cotton tensor as a covariant three-tensor. In particular, harmonicity of the Weyl tensor is preserved. In the arclength coordinate $u$, defined by $\mathrm{d}u=e^{\varphi}\mathrm{d}s$, its warping functions and logarithmic derivatives are
\begin{align}\label{conformalwarps}
\widetilde h_i=e^{\varphi}h_i,\qquad
\widetilde\xi_i=e^{-\varphi}(\xi_i+\varphi').
\end{align}
Moreover, for $\widetilde E_1=e^{-\varphi}E_1$,
\begin{align}\label{conformalSchouten}
\widetilde{\mathcal A}(\widetilde E_1,\widetilde E_1)
 =e^{-2\varphi}\left(\mathcal A(E_1,E_1)
 -(n-2)\left(\varphi''-\frac{(\varphi')^2}{2}\right)\right).
\end{align}
\end{lemma}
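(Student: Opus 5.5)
The proof combines the conformal invariance of the Weyl tensor with \eqref{conformalCotton}, and then computes the warping functions and the Schouten tensor directly. The statement has four parts: preservation of zero radial Weyl curvature, invariance of the Cotton tensor, the formulas \eqref{conformalwarps}, and \eqref{conformalSchouten}.

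\emph{Radial Weyl condition and Cotton tensor.} As a $(1,3)$-tensor the Weyl tensor is conformally invariant, so as a covariant four-tensor $\widetilde W=e^{2\varphi}W$. Since $\widetilde E_1=e^{-\varphi}E_1$,
\[
\widetilde W(\widetilde E_1,\cdot,\cdot,\cdot)=e^{\varphi}W(E_1,\cdot,\cdot,\cdot)=0 .
\]
Because $\varphi$ depends only on $s$, we have $\nabla\varphi=\varphi'E_1$. Substituting this into \eqref{conformalCotton} makes the extra term $(n-2)\varphi'W(E_1,Z,X,Y)$, which vanishes by \eqref{radialW}. Hence $\widetilde C=C$ as covariant three-tensors. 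Since $n\geq4$, harmonicity of $W$ is equivalent to $C=0$, so harmonicity is preserved.

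\emph{Warping functions.} Rewrite the metric as
\[
\widetilde g=e^{2\varphi}\mathrm{d}s^2+\sum_i e^{2\varphi}h_i^2g_{N_i}=\mathrm{d}u^2+\sum_i\widetilde h_i^2g_{N_i},
\]
where $\mathrm{d}u=e^{\varphi}\mathrm{d}s$ and $\widetilde h_i=e^{\varphi}h_i$. The chain rule $\frac{\mathrm{d}}{\mathrm{d}u}=e^{-\varphi}\frac{\mathrm{d}}{\mathrm{d}s}$ then gives
\[
\widetilde\xi_i=e^{-\varphi}\frac{(e^{\varphi}h_i)'}{e^{\varphi}h_i}=e^{-\varphi}(\xi_i+\varphi').
\]

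\emph{Schouten tensor.} Use the standard conformal law, in the normalization \eqref{schouten}, i.e.\ $\mathcal A=(n-2)P$ with $P$ the classical Schouten tensor:
\[
\widetilde{\mathcal A}=\mathcal A-(n-2)\Bigl(\nabla^2\varphi-\mathrm d\varphi\otimes\mathrm d\varphi+\tfrac12|\nabla\varphi|^2g\Bigr).
\]
Evaluate it on $(E_1,E_1)$. Because $\nabla_{E_1}E_1=0$, we have $\nabla^2\varphi(E_1,E_1)=\varphi''$. The bracket therefore becomes
\[
\varphi''-(\varphi')^2+\tfrac12(\varphi')^2=\varphi''-\tfrac12(\varphi')^2 .
\]
Multiplying by $e^{-2\varphi}$ to pass to the unit vector $\widetilde E_1$ yields \eqref{conformalSchouten}.

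The only step that needs care is the sign conventions in the conformal Schouten law; everything else is direct. To confirm that the signs match the paper's normalization, I would check the law on a simple example, such as flat space with a radial $\varphi$, or derive it once from the transformation law of the Ricci tensor.
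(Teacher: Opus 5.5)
Your proposal is correct and follows the paper's proof essentially step for step. You use $\widetilde W=e^{2\varphi}W$ together with \eqref{conformalCotton} for $\nabla\varphi=\varphi'E_1$, the change of arclength for \eqref{conformalwarps}, and the radial component of the conformal Schouten law. The law you state already has the correct signs for the normalization \eqref{schouten}, so the sign check you propose is not needed.
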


\begin{proof}
The covariant Weyl tensor satisfies $\widetilde W=e^{2\varphi}W$, while \eqref{conformalCotton} gives $\widetilde C=C$ under the radial hypothesis. The expressions in \eqref{conformalwarps} follow from the change of arclength. Finally, the Schouten transformation, with the normalization \eqref{schouten}, is
\[
\widetilde{\mathcal A}
 =\mathcal A-(n-2)\left(\nabla^2\varphi
   -\mathrm{d}\varphi\otimes\mathrm{d}\varphi
   +\frac{|\nabla\varphi|^2}{2}g\right).
\]
Its radial component is \eqref{conformalSchouten}.
\end{proof}

\begin{proposition}\label{quadratic}
Every metric in Theorem \ref{main} is locally conformal, by a factor depending on the base, to a multiply warped product whose radial Schouten eigenvalue is zero and whose warping functions are polynomials of degree at most two.

For this representative, write the metric and its warping functions as
\begin{align}\label{polynomialmetric}
g=\mathrm{d}t^2+\sum_{i=1}^k h_i(t)^2g_{N_i},
\qquad h_i(t)=A_i t^2+B_i t+C_i.
\end{align}
Define the constants
\begin{align}\label{Kdef}
K_{ij}=h_i'h_j'-h_i''h_j-h_ih_j''
      =B_iB_j-2A_iC_j-2A_jC_i.
\end{align}
Then the fiber equations are equivalent to
\begin{align}\label{rationalcompat}
\sum_{j=1}^k\frac{r_jK_{ij}}{h_j}
 =\frac{\mu_i+K_{ii}}{h_i},\qquad i\in\{1,\ldots,k\}.
\end{align}
Conversely, Einstein fibers with coefficients $\mu_i$ and quadratic warping functions satisfying \eqref{rationalcompat}, on an interval where the warping functions do not vanish, determine a metric with zero radial Schouten eigenvalue and both Weyl properties. For unit vectors tangent to distinct fibers,
\begin{align}\label{mixedW}
W(U_i,U_j,U_i,U_j)=-\frac{K_{ij}}{h_i h_j}.
\end{align}
\end{proposition}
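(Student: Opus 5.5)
The plan is to normalise the metric by two successive conformal changes depending only on the base, both justified by Lemma \ref{conformallemma}, and then to rewrite the fiber equations of Lemma \ref{einsteinODE} in the polynomial representative.

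\textbf{First conformal change.} Take $\varphi=-\log h_1$. By \eqref{conformalwarps}, the new metric has $\widetilde h_1\equiv1$. It still has harmonic Weyl tensor and zero radial Weyl curvature, so Lemma \ref{einsteinODE} applies to it. Write $a=\lambda_1-R/(2(n-1))=\mathcal A(E_1,E_1)$. Inserting the constant solution $\widetilde h_1=1$ into \eqref{thirdODE} forces $\widetilde a'=0$, so the radial Schouten eigenvalue is a constant $a$.

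\textbf{Second conformal change.} By \eqref{conformalSchouten}, a further factor $e^{2\psi(u)}$ makes the radial Schouten eigenvalue vanish provided
\[
(n-2)\bigl(\psi''-\tfrac12(\psi')^2\bigr)=a .
\]
This is a Riccati equation for $\psi'$, so it is solvable on a small subinterval. This is why the statement is only local; Remark \ref{continuation} lets us pass back to $I$.

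\textbf{Quadratic warping functions.} In the resulting representative $\mathcal A(E_1,E_1)=0$, and hence $\lambda_1=R/(2(n-1))$. Both coefficients in \eqref{thirdODE} then vanish, so $h_i'''=0$ and every $h_i$ is a polynomial of degree at most two. A direct differentiation shows
\[
K_{ij}'=h_i''h_j'+h_i'h_j''-h_i'''h_j-h_i''h_j'-h_i'h_j''-h_ih_j'''=0,
\]
which gives the constancy and the coefficient formula in \eqref{Kdef}.

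\textbf{Fiber equations.} From \eqref{radialODE} with $\mathcal A(E_1,E_1)=0$ we get $\mathcal A|_{TN_i}=-(n-2)(h_i''/h_i)g$. Therefore
\[
\lambda_{i+1}=\lambda_1-(n-2)\frac{h_i''}{h_i},\qquad \lambda_1=-\sum_j r_j\frac{h_j''}{h_j},
\]
where the second identity comes from \eqref{ricformulas} together with $S'+\sum_j r_j\xi_j^2=\sum_j r_jh_j''/h_j$. Substitute these into \eqref{compatRic}, use $\xi_i'=h_i''/h_i-\xi_i^2$, and multiply by $h_i$. Then write $n-2=\sum_j r_j-1$. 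The right-hand side regroups as
\[
\sum_j \frac{r_j\,(h_i'h_j'-h_i''h_j-h_ih_j'')}{h_j}+2h_i''-\frac{h_i'^2}{h_i},
\]
and the last two terms equal $-K_{ii}/h_i$. This is exactly \eqref{rationalcompat}. Since every step is reversible, \eqref{rationalcompat} is equivalent to \eqref{compatRic} in this gauge.

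\textbf{Mixed Weyl components.} From \eqref{sectional} and the Kulkarni--Nomizu formula, for unit $U_i,U_j$ in distinct fibers,
\[
W(U_i,U_j,U_i,U_j)=-\xi_i\xi_j-\frac{\mathcal A_{ii}+\mathcal A_{jj}}{n-2}
=-\frac{h_i'h_j'-h_i''h_j-h_ih_j''}{h_ih_j}=-\frac{K_{ij}}{h_ih_j},
\]
which is \eqref{mixedW}.

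\textbf{Converse.} Start from quadratic $h_i$ and Einstein fibers satisfying \eqref{rationalcompat}, and compute the Ricci tensor from \eqref{ricformulas}. Reversing the algebra above, \eqref{rationalcompat} gives $\lambda_{i+1}-\lambda_1=-(n-2)h_i''/h_i$. Taking the trace then yields $\mathcal A(E_1,E_1)=0$. The key point, and the main obstacle, is to check this trace identity cleanly: the combination $R-2(n-1)\lambda_1$ must vanish identically once \eqref{rationalcompat} holds. Granting it, the two Weyl conditions follow from the components in the proof of Lemma \ref{einsteinODE}:
\begin{enumerate}[label=\textup{(\alph*)}]
\item \eqref{radialODE} holds, and the fiber trace-free radial components vanish because the fibers are Einstein, so \eqref{radialW} is satisfied;
\item in \eqref{cottoncomponent}, the bracket $\lambda_{i+1}'-(\lambda_1-\lambda_{i+1})\xi_i-R'/(2(n-1))$ vanishes because $\lambda_1-R/(2(n-1))=0$ and $h_i'''=0$;
\item the remaining Cotton components vanish because the fiber scalar curvatures are constant.
\end{enumerate}
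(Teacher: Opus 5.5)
Your proposal is correct and follows the paper's proof essentially step for step. It uses the same two base-dependent conformal changes, the same regrouping of \eqref{compatRic} into \eqref{rationalcompat}, and the same converse. Your use of the constant solution $\widetilde h_1=1$ of \eqref{thirdODE} is a cosmetic variant of the paper's appeal to \eqref{radialODE} and \eqref{cottonODE}, and your local Riccati solution is a cosmetic variant of its explicit $\rho$.

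The trace identity you left as the ``main obstacle'' is a one-line check using the two formulas you already derived: $R=\lambda_1+\sum_i r_i\lambda_{i+1}=n\lambda_1-(n-2)\sum_i r_ih_i''/h_i=2(n-1)\lambda_1$. This is exactly how the paper closes the converse. In (c), the right justification is that the fibers are Einstein with constant $\mu_i$. Constant scalar curvature alone would not kill the Cotton components with all entries tangent to one fiber.
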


\begin{proof}
We keep the fiber metrics $g_{N_i}$ fixed and distinguish the two conformal metrics used in the normalization. First, set
\begin{align}\label{firstconformalmetric}
\widetilde g=h_1(s)^{-2}g
 =\frac{\mathrm{d}s^2}{h_1(s)^2}+g_{N_1}
  +\sum_{i=2}^k\frac{h_i(s)^2}{h_1(s)^2}g_{N_i}.
\end{align}
Since $h_1>0$, we may introduce an arclength coordinate $u$ by
\[
\mathrm{d}u=\frac{\mathrm{d}s}{h_1(s)},
\qquad
\widetilde h_i(u)=\frac{h_i(s(u))}{h_1(s(u))}.
\]
Thus the first conformal metric takes the form
\begin{align}\label{firstproductmetric}
\widetilde g=\mathrm{d}u^2+g_{N_1}
 +\sum_{i=2}^k\widetilde h_i(u)^2g_{N_i},
\qquad \widetilde h_1=1.
\end{align}
Its unit radial vector is $\widetilde E_1=\partial_u=h_1E_1$, and Lemma \ref{conformallemma} shows that it has harmonic Weyl tensor and zero radial Weyl curvature.

Let $\widetilde{\mathcal A}$ and $\widetilde R$ denote the Schouten tensor and scalar curvature of $\widetilde g$. The first fiber is now a direct product factor, so its Ricci eigenvalue is $\mu_1$. Applying \eqref{radialODE} and \eqref{cottonODE} to $\widetilde g$, with $\widetilde\xi_1=0$, gives
\[
\widetilde{\mathcal A}(\widetilde E_1,\widetilde E_1)
 =-\mu_1+\frac{\widetilde R}{2(n-1)},
\qquad
\frac{\mathrm{d}}{\mathrm{d}u}
 \left(\mu_1-\frac{\widetilde R}{2(n-1)}\right)=0.
\]
Since $\mu_1$ is constant, both $\widetilde R$ and the normalized radial Schouten eigenvalue
\[
a=\frac{\widetilde{\mathcal A}(\widetilde E_1,\widetilde E_1)}{n-2}
\]
are constant. We choose the additive constant in $u$ so that the point under consideration has coordinate $u=0$.

For a positive function $\rho=\rho(u)$, consider the second conformal metric
\begin{align}\label{secondconformalmetric}
\widehat g=\rho(u)^{-2}\widetilde g
 =\frac{\mathrm{d}u^2}{\rho(u)^2}
  +\sum_{i=1}^k\frac{\widetilde h_i(u)^2}{\rho(u)^2}g_{N_i}.
\end{align}
Define its arclength coordinate $t$ and warping functions by
\[
\mathrm{d}t=\frac{\mathrm{d}u}{\rho(u)},
\qquad
\widehat h_i(t)=\frac{\widetilde h_i(u(t))}{\rho(u(t))}.
\]
Then
\begin{align}\label{secondproductmetric}
\widehat g=\mathrm{d}t^2+\sum_{i=1}^k\widehat h_i(t)^2g_{N_i},
\qquad \widehat E_1=\partial_t=\rho\widetilde E_1.
\end{align}
Writing $\widehat{\mathcal A}$ for the Schouten tensor of $\widehat g$, formula \eqref{conformalSchouten} with $\varphi=-\log\rho$ gives
\[
\frac{\widehat{\mathcal A}(\widehat E_1,\widehat E_1)}{n-2}
 =a\rho^2+\rho\rho''-\frac{(\rho')^2}{2},
\]
where the derivatives of $\rho$ are taken with respect to $u$. We choose $\rho$ so that this expression vanishes. More explicitly, for $a\neq0$ put $c=\sqrt{|a|/2}$ and take
\[
\begin{array}{c|c|c}
 &\rho(u)&t(u)\\ \hline
 a>0&\cos^2(cu)&c^{-1}\tan(cu)\\
 a=0&1&u\\
 a<0&\cosh^2(cu)&c^{-1}\tanh(cu).
\end{array}
\]
Each choice has $\rho(0)=1$. After restricting to a neighborhood on which $\rho>0$, the coordinate $t$ is well defined and
\begin{align}\label{normalization}
\frac{\mathrm{d}t}{\mathrm{d}u}=\rho^{-1},\qquad
 a\rho^2+\rho\rho''-\frac{(\rho')^2}{2}=0.
\end{align}
Thus $\widehat{\mathcal A}(\widehat E_1,\widehat E_1)=0$, while Lemma \ref{conformallemma} again preserves both Weyl conditions. If $a=0$, this second change is the identity. In terms of the original metric and coordinate, the two changes together are
\begin{align}\label{combinedconformalmetric}
\widehat g=\frac{g}{h_1(s)^2\rho(u(s))^2},
\qquad
\mathrm{d}t=\frac{\mathrm{d}s}{h_1(s)\rho(u(s))}.
\end{align}

Equation \eqref{thirdODE}, applied to $\widehat g$ in the coordinate $t$, now reads
\[
\frac{\mathrm{d}^3\widehat h_i}{\mathrm{d}t^3}=0.
\]
Hence $\widehat h_i(t)=A_it^2+B_it+C_i$, proving the asserted conformal representation. For the rest of the proof, we write $g$ and $h_i$ for $\widehat g$ and $\widehat h_i$, as in \eqref{polynomialmetric}. All curvature quantities now refer to this final metric, $E_1=\partial_t$, and primes denote differentiation with respect to $t$. The fiber metrics have not changed, so their Einstein constants remain $\mu_i$.

The vanishing radial Schouten eigenvalue and \eqref{radialODE} give
\begin{align}\label{quadraticRic}
\lambda_1=\frac{R}{2(n-1)}=-\sum_{j=1}^k r_j\frac{h_j''}{h_j},
\qquad
\lambda_{i+1}=\lambda_1-(n-2)\frac{h_i''}{h_i}.
\end{align}
Consequently,
\begin{align*}
h_i\sum_{j=1}^k\frac{r_jK_{ij}}{h_j}
 &=h_i h_i'S-(n-1)h_i h_i''+\lambda_1h_i^2\\
 &=\mu_i+(h_i')^2-2h_i h_i''
 =\mu_i+K_{ii},
\end{align*}
where the second equality follows from \eqref{compatRic}. This proves \eqref{rationalcompat}, and \eqref{mixedW} follows from \eqref{sectional} and \eqref{quadraticRic}.

For the converse, compute the Ricci tensor of the proposed metric directly from \eqref{ricformulas}. Equation \eqref{rationalcompat} gives
\[
\mathrm{Ric}|_{TN_i}
 =\left(\mathrm{Ric}(E_1,E_1)-(n-2)\frac{h_i''}{h_i}\right)g|_{TN_i}.
\]
Taking the trace, and using $\mathrm{Ric}(E_1,E_1)=-\sum_jr_jh_j''/h_j$, yields
\[
R=2(n-1)\mathrm{Ric}(E_1,E_1).
\]
Thus the radial Schouten eigenvalue is zero and \eqref{quadraticRic} holds for the actual Ricci tensor. The radial Weyl condition follows. The tangential Schouten eigenvalues are $-(n-2)h_i''/h_i$, whose derivatives are $(n-2)h_i''h_i'/h_i^2$ because $h_i'''=0$. Equation \eqref{cottoncomponent} therefore gives $C=0$.
\end{proof}

\section{The number of fibers and the local geometry}\label{bound}

We now use the compatibility equations to bound the number of fibers. We first consider constant functions and polynomials with a repeated root. The remaining cases are distinguished by whether two polynomials have a common root. Whenever we clear denominators, the resulting polynomial identity holds identically, since it holds on an open interval. We may therefore compare coefficients or evaluate it at a root, independently of whether that root belongs to the interval on which the metric is defined.

\begin{proof}[\bf Proof of Theorem \ref{main}]
The Einstein assertion follows from Lemma \ref{einsteinODE}. Work with the representative of Proposition \ref{quadratic}. By Remark \ref{continuation}, its warping polynomials are pairwise nonproportional.

\smallskip
\noindent\textit{A constant warping function or a repeated root.}
Suppose first that $K_{ii}=0$ for some $i$. By \eqref{Kdef}, either $h_i$ is constant or $h_i=A_i(t-c)^2$ for a real number $c$. In the latter case, the change
\begin{align}\label{inversion}
u=\frac{1}{t-c},\qquad
\widetilde g=(t-c)^{-4}g,\qquad
\widetilde h_j(u)=u^2h_j(c+u^{-1})
\end{align}
makes the selected warping function constant. The new warping functions are still quadratic, their radial Schouten eigenvalue is zero, and their constants in \eqref{Kdef} remain $K_{ij}$. The coordinate change is defined on the regular interval because $h_i>0$ there.

We may therefore assume that one warping function is constant. By \eqref{quadraticRic} and the Ricci equation on that fiber, $\lambda_1$ is constant. On the other hand,
\[
\lambda_1=-\sum_{j=1}^k r_j\frac{h_j''}{h_j}
 =-\sum_{A_j\neq0}\frac{2r_jA_j}{h_j}.
\]
Suppose that at least one $A_j$ is nonzero. Dividing each quadratic polynomial by its leading coefficient and clearing denominators gives
\begin{align}\label{degreecomparison}
\lambda_1\prod_{A_j\neq0}\frac{h_j}{A_j}
 =-2\sum_{A_j\neq0}r_j
   \prod_{\substack{A_\ell\neq0\\\ell\neq j}}\frac{h_\ell}{A_\ell}.
\end{align}
Each factor $h_j/A_j$ is monic of degree two. If $\lambda_1\neq0$, the left-hand side has degree two more than every term on the right, which is impossible. Thus $\lambda_1=0$. However, all products on the right have the same degree and leading coefficient $1$, so the leading coefficient of their weighted sum is $-2\sum_{A_j\neq0}r_j\neq0$. This is again a contradiction. Hence every $A_j$ vanishes, and \eqref{quadraticRic} shows that all Ricci eigenvalues vanish.

For a nonconstant affine warping function, \eqref{compatRic} becomes
\begin{align}\label{affinecompat}
\mu_j+(h_j')^2=S h_j'h_j.
\end{align}
If there is any such warping function, the polynomial $\prod_jh_j^{r_j}$ is nonconstant, so its logarithmic derivative $S$ is not identically zero. Since $h_j'$ is a nonzero constant and $h_j>0$ on the interval, \eqref{affinecompat} implies that $\mu_j+(h_j')^2$ is nonzero. For any two nonconstant warping functions, division of their equations \eqref{affinecompat} therefore shows that their ratio is constant. Hence there are at most two fibers, with at most one nonconstant warping function, and all the components in \eqref{mixedW} vanish.

\smallskip
\noindent\textit{Warping functions with no common root.}
We may now suppose that $K_{ii}\neq0$ for every $i$. Each $h_i$ is nonconstant and has no repeated root. Suppose that no two of these polynomials have a common root in $\mathbb{C}$. Fix $i\neq j$, multiply \eqref{rationalcompat} by $h_j$ and evaluate at a root of $h_j$. All terms vanish except $r_jK_{ij}$, because none of the other denominators vanishes there. Hence $K_{ij}=0$.

These equalities also show that the polynomials $h_i$ are linearly independent. Indeed, if $\sum_i b_i h_i=0$, the bilinearity of the expression in \eqref{Kdef} gives $\sum_i b_iK_{ij}=0$ for each $j$. Since $K_{ij}=0$ for $i\neq j$ and $K_{jj}\neq0$, we obtain $b_j=0$. The vector space of polynomials of degree at most two has dimension three, and therefore $k\leq3$. Again, all components in \eqref{mixedW} vanish.

\smallskip
\noindent\textit{Two warping functions with a common root.}
Otherwise, two nonproportional polynomials have a common root. This root is real, since two real polynomials of degree at most two sharing a nonreal root also share its conjugate. It is simple because $K_{ii}\neq0$. The inversion \eqref{inversion} at this root makes both warping functions affine. Their roots are distinct, so an affine change of coordinate, together with a constant conformal rescaling and constant rescalings of the fiber metrics, gives
\[
h_1=t,\qquad h_2=1-t.
\]
Here and below we may use signed polynomial factors, since the metric contains their squares. Each polynomial has a fixed sign on the interval under consideration.

Put $c_i=\mu_i+1$ for $i\in\{1,2\}$. Since $h_1''=h_2''=0$, equations \eqref{compatRic} and \eqref{quadraticRic} give
\[
\lambda_1t^2+St=c_1,\qquad
\lambda_1(t-1)^2+S(t-1)=c_2.
\]
Solving these equations, we obtain
\begin{align}\label{commonroot}
\lambda_1=\frac{c_1}{t}-\frac{c_2}{t-1},
\qquad S=\frac{(1-t)c_1}{t}+\frac{tc_2}{t-1}.
\end{align}
Let $c$ be a root of one of the polynomials. Since every root is simple, the identity $S=\sum_jr_jh_j'/h_j$ gives
\[
\lim_{t\to c}(t-c)S(t)=\sum_{h_j(c)=0}r_j>0.
\]
Indeed, a polynomial vanishing at $c$ contributes $r_j$ to the limit, while each of the other polynomials contributes zero. If $c\notin\{0,1\}$, the expression for $S$ in \eqref{commonroot} is regular at $c$, so the same limit would be zero. Thus every root is either $0$ or $1$. The polynomials are nonconstant, have no repeated root and have degree at most two. Hence each is proportional to one of
\begin{align}\label{threepolynomials}
t,\qquad 1-t,\qquad t(1-t).
\end{align}
This proves $k\leq3$ in the remaining case.

To determine the geometry, set $r_3=0$ if the third polynomial is absent, and otherwise normalize it as in \eqref{threepolynomials}. The first two fiber equations now read
\begin{align}\label{dimensioncompat}
\mu_1&=r_1+r_3-1+(r_3-r_2)\frac{t}{1-t},\nonumber\\
\mu_2&=r_2+r_3-1+(r_3-r_1)\frac{1-t}{t}.
\end{align}
The left-hand sides are constant, so $r_1=r_2=r_3=:r>0$. In particular, the third fiber is present. The third equation then gives
\[
\mu_1=\mu_2=\mu_3=2r-1.
\]
Since a one-dimensional fiber has zero Ricci curvature, $r\geq2$. Restoring the conformal factor gives \eqref{exceptional}.

Conversely, the three polynomials in \eqref{threepolynomials} have
\begin{align}\label{exceptionalK}
K_{11}=K_{22}=K_{33}=1,\qquad
K_{12}=-1,\qquad K_{13}=K_{23}=1.
\end{align}
With $r_i=r$ and $\mu_i=2r-1$, they satisfy \eqref{rationalcompat}. Proposition \ref{quadratic} and Lemma \ref{conformallemma} prove that every metric \eqref{exceptional} has both Weyl properties. Equations \eqref{mixedW} and \eqref{exceptionalK} show that its components between distinct fibers are nonzero.

It remains to identify the geometry when all components between distinct fibers vanish. The curvature formulas show that the remaining Weyl components are supported within individual fibers. Replace each Einstein fiber of dimension $r_i\geq2$ by a local space form of sectional curvature $\mu_i/(r_i-1)$, keeping the warping functions fixed; replace a one-dimensional fiber by a flat line. The Ricci and Schouten tensors are unchanged. The only possible Weyl components of the resulting metric are now the components within each space-form fiber, where the curvature expression is a scalar multiple of the constant-curvature tensor. The trace-free identity for $W$ forces this scalar to vanish. Hence the resulting metric is locally conformally flat, proving \textup{(i)}.

Conversely, replacing the space-form fibers of a locally conformally flat metric by Einstein fibers with the same dimensions and Ricci constants preserves its Ricci and Schouten formulas, its Cotton tensor and its radial Weyl components. This proves the converse in \textup{(i)}.

Finally, in the original coordinate the quantities
\begin{align}\label{originalK}
h_i'h_j'+\frac{\lambda_{i+1}+\lambda_{j+1}-R/(n-1)}{n-2}h_i h_j
\end{align}
are constant, as follows by differentiating and applying \eqref{radialODE} and \eqref{cottonODE}. For $i\neq j$, they equal $-h_i h_jW(U_i,U_j,U_i,U_j)$ and are unchanged by radial conformal changes. Thus the alternatives cannot change along $I$. Together with Remark \ref{continuation}, this proves the fiber count on $I$ and the stated local classification.
\end{proof}

\begin{corollary}\label{lowfibers}
A metric in Theorem \ref{main} with one or two fibers is locally obtained by Einstein replacement of locally conformally flat data. If any Weyl sectional component between distinct fibers is nonzero, then $n=3r+1$ for an integer $r\geq2$.
\end{corollary}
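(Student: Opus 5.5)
The corollary is a direct consequence of the dichotomy in Theorem \ref{main}, so the plan is to read it off from the two alternatives.

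\emph{First assertion.} Suppose the metric has one or two fibers, counted as in Remark \ref{counting}. Alternative \ref{exceptionalcase} requires exactly three fibers. By Remark \ref{continuation}, and by the final part of the proof of Theorem \ref{main}, the fiber count and the alternative do not change along $I$. Hence alternative \ref{exceptionalcase} cannot occur anywhere on $I$, and alternative \ref{purecase} holds. That alternative says precisely that, locally, the metric is obtained from a locally conformally flat multiply warped product by replacing its space-form fibers with Einstein manifolds of the same dimensions and Ricci constants. I would also note that the two-fiber possibilities in the proof of Theorem \ref{main} all give vanishing components in \eqref{mixedW}. These are the constant or repeated-root case and the case of no common root. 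The common-root case forces all three polynomials in \eqref{threepolynomials}, and hence three fibers.

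\emph{Second assertion.} Suppose some component $W(U_i,U_j,U_i,U_j)$ with $i\neq j$ is nonzero at some point. Then alternative \ref{purecase} fails there, because in that alternative the Weyl tensor has nonzero components only within individual fibers. Since the alternatives are exclusive, alternative \ref{exceptionalcase} holds. It gives three fibers of a common dimension $r\geq2$. The dimension formula in \eqref{MWPS} then yields $n=1+3r$.

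\emph{Main point to check.} The only delicate step is that the local alternative is global on $I$. This follows from the constancy of the quantities \eqref{originalK}, which equal $-h_ih_jW(U_i,U_j,U_i,U_j)$. So a mixed Weyl component is either nonzero everywhere on $I$ or vanishes identically. I would cite this directly rather than reprove it.
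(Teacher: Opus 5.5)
Your proposal is correct and matches the paper, which gives no separate proof and reads the corollary directly off the dichotomy in Theorem \ref{main}. One or two fibers exclude alternative (ii), and a nonzero mixed Weyl component excludes (i), so (ii) yields three fibers of common dimension $r\geq2$ and $n=3r+1$; your use of the constancy of \eqref{originalK} to make the alternative global on $I$ is the same compatibility argument the paper gives at the end of the proof of Theorem \ref{main}.
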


\begin{remark}\label{horizontalnullity}
A natural extension of \eqref{radialW} to a multiply warped product $B\times_{h_1}N_1\times\cdots\times_{h_k}N_k$ would be to require
\[
W(X,\cdot,\cdot,\cdot)=0
\quad\text{for every vector field $X$ tangent to $B$.}
\]
For a Riemannian metric with $\dim B\geq2$ and $\dim M\geq4$, this condition already implies $C=0$. Indeed, the horizontal curvature equations make the scalar curvature depend only on the base, and the mixed equations make the fibers Einstein. Moreover, $\nabla W$ vanishes whenever two of its curvature arguments are horizontal. The contracted Bianchi identity therefore gives the vanishing of the Cotton components with at least two horizontal entries. The differential Bianchi identity, applied to two orthogonal horizontal directions, gives the vanishing of those with one horizontal entry. The vertical components vanish by the Einstein condition on the fibers. Thus harmonic Weyl curvature is automatic under this extension, rather than an independent additional hypothesis.
\end{remark}

\section{Examples and the two Weyl conditions}\label{examples}

\subsection{Sharpness of the bound}

\begin{proof}[\bf Proof of Theorem \ref{sharpness}]
Choose Einstein manifolds $(N_i^r,g_{N_i})$ satisfying \eqref{exceptionalRic}, for instance suitably rescaled round spheres. In \eqref{exceptional}, take $\varphi=0$ and $t\in(0,1/2)$. By Theorem \ref{main}, the resulting metric has both Weyl properties and three fibers. Its warping functions have derivatives $1$, $-1$ and $1-2t$, all nowhere zero on this interval. Moreover,
\[
\xi_1=\frac1t,\qquad
\xi_2=-\frac1{1-t},\qquad
\xi_3=\frac1t-\frac1{1-t},
\]
so $\xi_2<\xi_3<\xi_1$. Finally, \eqref{mixedW} and \eqref{exceptionalK} give
\begin{align*}
W(U_1,U_2,U_1,U_2)&=\frac1{t(1-t)},\\
W(U_1,U_3,U_1,U_3)&=-\frac1{t^2(1-t)},\qquad
W(U_2,U_3,U_2,U_3)=-\frac1{t(1-t)^2},
\end{align*}
for unit vectors $U_i$ tangent to the indicated fibers. This proves the assertion.
\end{proof}

\subsection{Each Weyl condition separately}

\begin{proposition}\label{independence}
Harmonicity of the Weyl tensor and zero radial Weyl curvature do not imply one another, even for multiply warped products with flat fibers.
\end{proposition}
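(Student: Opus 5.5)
The plan is to exhibit two explicit multiply warped products with flat fibers, one for each direction of the non-implication. For harmonic Weyl curvature without radial Weyl vanishing, I would use a Ricci-flat example, since Einstein metrics have $C=0$. For the converse, I would take one constant warping function and solve the radial Weyl equation directly, then show by the argument of Proposition \ref{quadratic} that the Cotton tensor cannot vanish.

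\emph{Harmonic but not radially flat.} I would take the Riemannian Kasner metric
\[
g=\mathrm{d}s^2+s^{4/3}g_{\mathbb R^2}+s^{-2/3}g_{\mathbb R},\qquad s>0,
\]
so $n=4$ and the exponents are $p_1=2/3$ (on a two-dimensional flat fiber) and $p_2=-1/3$. The conditions $r_1p_1+r_2p_2=1$ and $r_1p_1^2+r_2p_2^2=1$ hold. Using \eqref{ricformulas} with $\xi_i=p_i/s$ and $S=1/s$, one checks $\mathrm{Ric}=0$. Hence $\mathcal A=0$ and $C=0$.

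Since $\mathcal A=0$, the radial Weyl component equals the sectional curvature in \eqref{sectional}:
\[
W(E_1,U_i,E_1,U_i)=-\xi_i'-\xi_i^2=\frac{p_i(1-p_i)}{s^2}.
\]
This is nonzero for $p_i\notin\{0,1\}$, so \eqref{radialW} fails.

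\emph{Radially flat but not harmonic.} I would take $h_1=1$ on a flat fiber of dimension $r_1\ge2$, and $h_2=h(s)$ on a flat fiber of dimension $r_2\ge2$, with $\xi=h'/h$. With flat fibers and $k=2$, the trace-free identity shows that \eqref{radialW} is equivalent to equality of the two radial Weyl sectional components. Writing this equality out from \eqref{ricformulas} and \eqref{sectional}, with $\lambda_2=0$ and $\lambda_3=-(\xi'+r_2\xi^2)$, gives the Riccati equation
\[
(r_1+r_2-2)\xi'=(1-r_1)\xi^2 .
\]
Its solution is $\xi=\alpha/s$ with $\alpha=(r_1+r_2-2)/(r_1-1)$, that is, $h=s^\alpha$. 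With this choice, \eqref{radialW} holds by construction.

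To see that $C\neq0$, I would compute $\lambda_1=-S'-r_2\xi^2=r_2\alpha(1-\alpha)/s^2$, which is nonconstant because $\alpha\neq0$ and $\alpha=1$ would force $r_2=1$. On the other hand, the argument in the proof of Proposition \ref{quadratic} shows the following: if $C=0$ and \eqref{radialW} holds, a constant warping function forces $\lambda_1=\mu_1-\text{const}=$ const, by \eqref{radialODE} and \eqref{cottonODE}. Therefore $C\neq0$. Alternatively, I would check directly that \eqref{cottoncomponent} is a nonzero multiple of $s^{-3}$.

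The main obstacle is the bookkeeping in the radial Weyl equality for the second example, specifically getting the coefficients of the Riccati equation right. A sign slip there could make $\alpha$ land in $\{0,1\}$ and ruin the example. I would double-check it by confirming, in the degenerate case $r_2=1$, that the construction reduces to a locally conformally flat metric.
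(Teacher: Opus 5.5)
Your proposal is correct. For the direction ``radial Weyl vanishing does not imply harmonicity'' it is essentially the paper's example; for the other direction it takes a genuinely different route.

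\textbf{Harmonic but not radially flat.} The paper cites the harmonic-curvature metrics of Derdzi\'nski and Piccione, which have arbitrarily many flat fibers. It then invokes Theorem \ref{main}: four or more fibers are incompatible with both Weyl conditions. You instead exhibit the Ricci-flat Kasner metric in dimension $4$ and compute $W(E_1,U_i,E_1,U_i)=p_i(1-p_i)/s^2\neq0$ directly. Your checks are right: $S=1/s$, $\sum r_i\xi_i^2=1/s^2$ and $\xi_i'+S\xi_i=0$. Your route is self-contained, works in the lowest admissible dimension, and does not depend on the main theorem. The paper's route also shows that harmonicity alone places no bound on the number of fibers, which is the contrast the article wants to stress.

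\textbf{Radially flat but not harmonic.} Your metric $\mathrm{d}s^2+g_{N_1}+s^{2\alpha}g_{N_2}$ with $r_1=r_2$, hence $\alpha=2$, is exactly the paper's $\mathrm{d}s^2+e^{2s}g_{N_1}+e^{-2s}g_{N_2}$ after the radial conformal change $e^{-2s}g$ and the substitution $u=e^{-s}$. By Lemma \ref{conformallemma}, this change preserves both the Cotton tensor and radial Weyl vanishing. So it is the same example in the normalization of the first step of Proposition \ref{quadratic}. You derive it systematically from the Riccati equation, whose coefficients are correct, and you obtain it for unequal fiber dimensions as well.

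\textbf{Nonharmonicity.} Your argument via the constancy of $\lambda_1$ is valid. There is one harmless sign slip: the precise relation is $\lambda_1=R/(n-1)-\mu_1$, with $R$ constant when $C=0$. In your example $R=r_2\alpha\bigl(2-(1+r_2)\alpha\bigr)/s^2$ is nonconstant because $\alpha>1$. The paper simply reads off $C(E_1,U_1,U_1)=2r$ from \eqref{cottoncomponent}, which corresponds to your alternative direct check.
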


\begin{proof}
The harmonic-curvature metrics of Derdzi\'nski and Piccione \cite[Section 5]{derdPiccione} include multiply warped products with arbitrarily many flat fibers. For $n\geq4$, harmonic curvature is equivalent to harmonic Weyl curvature together with constant scalar curvature. Thus their examples have harmonic Weyl tensor, while Theorem \ref{main} implies that radial Weyl curvature cannot vanish identically when there are at least four fibers.

For the converse, take two flat fibers $N_1^r,N_2^r$, with $r\geq2$, and consider
\begin{align}\label{radialcounterexample}
g=\mathrm{d}s^2+e^{2s}g_{N_1}+e^{-2s}g_{N_2}.
\end{align}
Here $n=2r+1$, and \eqref{ricformulas} gives
\[
(\xi_1,\xi_2)=(1,-1),\qquad
(\lambda_1,\lambda_2,\lambda_3)=(-2r,0,0),\qquad R=-2r.
\]
Consequently, for unit $U_i\in TN_i$,
\[
W(E_1,U_i,E_1,U_i)=-1-\frac{-2r+1}{2r-1}=0.
\]
The remaining radial components also vanish by the multiply warped curvature formulas. On the other hand, \eqref{cottoncomponent} yields
\[
C(E_1,U_1,U_1)=2r,\qquad C(E_1,U_2,U_2)=-2r.
\]
Hence the Cotton tensor is nonzero.
\end{proof}

\section{Soliton-type equations}\label{additional}

We consider the generalized quasi-Einstein equation
\begin{align}\label{GQE}
\mathrm{Ric}+\nabla^2f-\mu\,\mathrm{d}f\otimes\mathrm{d}f=\lambda g.
\end{align}
This is the equation studied by Catino \cite{catino}. It includes gradient Ricci solitons when $\mu=0$ and $\lambda$ is constant, and gradient almost Ricci solitons when $\mu=0$ and $\lambda$ is allowed to vary, as introduced by Pigola, Rigoli, Rimoldi and Setti \cite{pigola}.

Let the product representation be adapted to $f$, so that $f=f(s)$ and $f'\neq0$. Its radial direction is therefore normal to the regular potential levels. For vectors $U_i,V_i$ tangent to $N_i$, the connection formulas give
\[
\nabla^2f(U_i,V_i)=f'\xi_i g(U_i,V_i).
\]
Since $\mathrm{d}f$ vanishes on these vectors, the fiber components of \eqref{GQE} yield
\begin{align}\label{solitonspectrum}
\lambda_{i+1}=\lambda-f'\xi_i,
\qquad
\lambda_{i+1}-\lambda_{j+1}=-f'(\xi_i-\xi_j).
\end{align}
The transverse Ricci eigenvalues are those of the restriction of the Ricci endomorphism to $(\nabla f)^\perp$, that is, to the tangent space of a potential level. Since $f'\neq0$, \eqref{solitonspectrum} identifies their distinct values at each point with the distinct values of the $\xi_i$. A representation with $k$ fibers therefore has at most $k$ transverse Ricci eigenvalues. Proportional warping functions give the same eigenvalue, while different fibers may have coincident eigenvalues at a point. For gradient Ricci and gradient almost Ricci solitons with harmonic Weyl tensor, the two-fiber estimate thus gives at most two transverse Ricci eigenvalues; compare \cite{kimall,BHSricci} and \cite[Remark 3.8]{BHSalmost}.

Catino \cite[Theorem 1.1]{catino} proved the single-fiber local description under harmonic Weyl curvature and zero radial Weyl curvature. His proof uses the conformal metric $e^{-2f/(n-2)}g$ and the geometry of the eigendistributions of a Codazzi tensor, studied by Derdzi\'nski \cite{derdCodazzi}. The following proposition records Catino's conclusion in the present setting. We give an alternative proof using the fiber equations, with the nondegeneracy condition stated explicitly.

\begin{proposition}[Catino \cite{catino}]\label{solitoncomparison}
Suppose that a metric in Theorem \ref{main} satisfies \eqref{GQE}, where $f$, $\mu$ and $\lambda$ are smooth functions of $s$, and
\begin{align}\label{nondegenerate}
f'\neq0,\qquad 1+(n-2)\mu\neq0
\end{align}
on $I$. Then $k=1$, and the regular level sets of $f$ are totally umbilical. If $W=0$, the fiber has constant sectional curvature.
\end{proposition}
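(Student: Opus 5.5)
The plan is to show that any two fibers have the same logarithmic derivative $\xi_i$. Then their warping functions are proportional, and the counting convention of Remark \ref{counting} merges them. The ingredients are the fiber and radial components of \eqref{GQE} and the two ODEs of Lemma \ref{einsteinODE}.

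First, write the components of \eqref{GQE}. The fiber components are \eqref{solitonspectrum}, $\lambda_{i+1}=\lambda-f'\xi_i$. The radial component, using $\nabla^2 f(E_1,E_1)=f''$, is
\[
\lambda_1=\lambda-f''+\mu (f')^2 .
\]
Fix $i\neq j$ and set $\delta=\xi_i-\xi_j$. Subtract the instances of \eqref{cottonODE} for $i$ and $j$; the $R'$ terms cancel, leaving
\[
(\lambda_{i+1}-\lambda_{j+1})'=\lambda_1\delta-(\lambda_{i+1}\xi_i-\lambda_{j+1}\xi_j).
\]

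Second, substitute $\lambda_{i+1}=\lambda-f'\xi_i$ on both sides. The left-hand side becomes $-f''\delta-f'(\xi_i'-\xi_j')$. Subtracting the instances of \eqref{radialODE} for $i$ and $j$ gives
\[
\xi_i'-\xi_j'=-\delta(\xi_i+\xi_j)+\frac{f'\delta}{n-2}.
\]
The right-hand side becomes $\delta\bigl(\lambda_1-\lambda+f'(\xi_i+\xi_j)\bigr)$. Everything is therefore a multiple of $\delta$. On any open set where $\delta\neq0$, divide by $\delta$ and insert the radial equation for $\lambda_1-\lambda$. The terms $-f''$ and $f'(\xi_i+\xi_j)$ cancel on both sides, leaving
\[
(f')^2\bigl(1+(n-2)\mu\bigr)=0 .
\]
This contradicts \eqref{nondegenerate}. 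Hence $\delta$ vanishes on a dense set, so $\xi_i\equiv\xi_j$ by continuity. Then $h_i/h_j$ is constant, and by Remark \ref{counting} (together with Remark \ref{continuation} for the whole interval $I$) we get $k=1$. The main care needed is this algebra, in particular checking that the $\xi_i+\xi_j$ terms and $f''$ genuinely cancel, so that only the nondegeneracy factor survives.

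Third, with one fiber the regular level sets of $f$ are the slices $s=\mathrm{const}$. Their second fundamental form is $-\xi_1 g$ restricted to the slice, by the connection formula for $\nabla_{U}V$, so they are totally umbilical.

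Finally, suppose $W=0$. The fiber has dimension $r=n-1\geq3$ and is Einstein by Lemma \ref{einsteinODE}. For a single-fiber warped product, the Weyl components within the fiber are $h_1^{2}$ times the Weyl tensor of $g_{N_1}$; this is the within-fiber content of alternative \ref{purecase} of Theorem \ref{main}. So $W=0$ makes $g_{N_1}$ locally conformally flat. For $r\geq4$, a conformally flat Einstein metric has $\mathrm{Rm}$ determined by its Ricci tensor, hence constant sectional curvature. For $r=3$, Einstein alone already gives constant curvature.
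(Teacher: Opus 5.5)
Your proposal is correct and is essentially the paper's argument: the paper also substitutes $\lambda_{i+1}=\lambda-f'\xi_i$ and the radial component of \eqref{GQE} into the differences of \eqref{radialODE} and \eqref{cottonODE} for two fibers, which yields $(1+(n-2)\mu)(f')^2(\xi_i-\xi_j)=0$. It then reads off total umbilicity from the connection formulas and constant fiber curvature from the tangential curvature formula with $W=0$; your division by $\delta$ on open sets is harmless but unnecessary, since the identity holds pointwise and already forces $\delta\equiv0$.
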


\begin{proof}[Alternative proof]
The radial component of \eqref{GQE} is
\[
\lambda_1=\lambda-f''+\mu(f')^2.
\]
Using \eqref{solitonspectrum} and subtracting the equations \eqref{radialODE} for two fibers, we obtain
\begin{align}\label{solitonradialdiff}
(\xi_i'+\xi_i^2)-(\xi_j'+\xi_j^2)
 =\frac{f'}{n-2}(\xi_i-\xi_j).
\end{align}
On the other hand, substitution in \eqref{cottonODE}, followed by subtraction for $i$ and $j$, yields
\begin{align}\label{solitoncottondiff}
f'\bigl((\xi_i'+\xi_i^2)-(\xi_j'+\xi_j^2)\bigr)
 =-\mu(f')^2(\xi_i-\xi_j).
\end{align}
Combining these two identities, we obtain
\[
\bigl(1+(n-2)\mu\bigr)(f')^2(\xi_i-\xi_j)=0.
\]
The assumptions imply $\xi_i=\xi_j$ for all $i,j$. Integrating $h_i'/h_i=h_j'/h_j$ shows that the ratio of any two warping functions is constant, so $k=1$ by Remark \ref{counting}.

The second fundamental form of a slice, with normal $E_1$, is $g(\nabla_XE_1,Y)=\xi_1g(X,Y)$. Since $f'\neq0$, these slices are precisely the connected regular potential levels, proving total umbilicity. Finally, for a single Einstein fiber the tangential curvature formula, together with $W=0$, makes the sectional curvature of the fiber constant.
\end{proof}

In particular, the conclusion applies to gradient Ricci and gradient almost Ricci solitons with the two Weyl conditions on their regular potential-adapted product regions. In the terminology above, they have a single transverse Ricci eigenvalue. When the Weyl tensor vanishes, the fiber has constant sectional curvature, as in \cite[Corollary 1.4]{catino}. The same factor $1+(n-2)\mu$ controls the difference between the radial and transverse Ricci eigenvalues of $e^{-2f/(n-2)}g$ in Catino's conformal argument. Thus the three-fiber bound concerns a larger class than these soliton models.

\section*{Acknowledgments}

M. A. R. M. Hor\'acio was supported by CAPES Finance Code 001. He thanks the Mathematics Department of Universidade de Bras\'ilia, where this work was conducted. He is particularly grateful to his doctoral advisor, Jo\~ao Paulo dos Santos, for his valuable guidance, under which the problem studied in this article arose.

\end{document}